\documentclass[10pt]{amsart}
\addtolength\headheight{4pt}
\usepackage{todonotes}
\usepackage{amssymb,mathrsfs}
\usepackage{amsmath}
\usepackage{mathtools}
\usepackage{mathabx}
\usepackage{color}
\usepackage{graphicx}
\graphicspath{{./figures/}}
\usepackage{float}
\usepackage{tikz-cd}
%following packages are needed for the table
\usepackage{tabularx}
\usepackage{tabulary}
\usepackage{multirow}
\usepackage[T1]{fontenc}%% For Geba
\usepackage{lmodern}
\usepackage{xfrac}

\usetikzlibrary{cd}
\usepackage[bottom]{footmisc}
\usepackage[colorlinks,hyperindex,linkcolor=blue]{hyperref}
\hypersetup{
 pdfauthor = {Lauran E. Toussaint, Thomas O. Rot},
 pdftitle= {Non-linear proper Fredholm maps and the stable homotopy groups of spheres},
 pdfsubject = {}}
\subjclass[2020]{55P57, 55Q45, 19L41, 55N22, 58B05, 58B15, 47A53}
\usepackage{enumerate}

%\linespread{1.4}
%\setlength\parindent{0pt}
\setlength{\oddsidemargin}{5pt} \setlength{\evensidemargin}{5pt}
\setlength{\textwidth}{440pt}
\setlength{\topmargin}{-50pt}
\setlength{\textheight}{24cm}
\setlength{\parskip}{3mm plus0.4mm minus0.4mm}
%s\setlength{\parindent}{15 pt}

% \numberwithin{equation}{subsection}

%Notation commands
             %Smooth cobordism in $\R^\infty$
     %Stably framed cobordism in $\R^\infty$
         %Framed cobordism in $\H$.

\newcommand{\prop}{{\operatorname{prop}}}
\newcommand{\stab}[1]{\mathcal{S}#1}
\DeclareMathOperator{\ind}{ind}

\newcommand{\point}{*}
\newcommand{\K}{{\operatorname{K}}}     %K-theory is a functor

\DeclareMathOperator{\Vect}{Vect}
\DeclareMathOperator{\Det}{Det}
%The usual stuff
\newcommand{\R}{{\mathbb{R}}}
\newcommand{\Z}{{\mathbb{Z}}}

\newcommand{\N}{\mathbb{N}}

\renewcommand{\H}{\mathbb{H}}
\renewcommand{\S}{\mathbb{S}}

\newcommand{\F}{\mathcal{F}}
\newcommand{\FO}{\mathcal{F}\mathcal{O}}

\newcommand{\id}{{\operatorname{id}}}

\newcommand{\coker}{\operatorname{coker}}
\newcommand{\fr}{\mathrm{fr}}

\renewcommand{\d}{{\operatorname{d}}}
\newcommand{\pr}{{\operatorname{pr}}}
\newcommand{\wtd}{\widetilde}
\newcommand{\tois}{\xrightarrow{\sim}}

\newcommand{\Hom}{{\operatorname{Hom}}}
\newcommand{\GL}{{\operatorname{GL}}}

\newcommand{\colim}{{\operatorname{colim}}}

\newtheorem{lemma}{Lemma}

\newtheorem{theorem}[lemma]{Theorem}
\newtheorem*{theorem*}{Theorem}
\newtheorem{corollary}[lemma]{Corollary}
\newtheorem{definition}[lemma]{Definition}

\begin{document} 
\title{Non-linear proper Fredholm maps and the stable homotopy groups of spheres}
\author{Thomas O. Rot \and Lauran Toussaint}

\begin{abstract}
We classify non-linear proper Fredholm maps between Hilbert spaces, up to proper homotopy, in terms of the stable homotopy groups of spheres. We show that there is a surjective map from the stable homotopy groups of spheres to the set of non-linear proper Fredholm maps up to proper homotopy and determine the non-trivial kernel. We also discuss the case of oriented non-linear proper Fredholm maps which are in bijection with the stable homotopy groups of spheres. 
\end{abstract}

\maketitle

\section{Introduction}

Consider a real, infinite dimensional, separable Hilbert space $\H$. In this paper we give a full classification of (non-linear) proper Fredholm maps $f:\H \to \H$, up to proper Fredholm homotopy, in terms of the stable homotopy groups of spheres. We also give a full classification when the maps, as well as the homotopies are oriented.

Classification results for restricted classes of proper homotopy classes non-linear Fredholm maps have been achieved by several authors in the past, cf.~\cite{BauerFuruta, Berger, Geba, Svarc}. Common in these results is that the differentials of the non-linear Fredholm maps are assumed to land in a contractible subspace of the space of linear Fredholm maps. In \cite{AbbondandoloRot, AbbondandoloRot2}, building on the work~\cite{ElworthyTromba}, complete classification results were obtained for proper homotopy classes of non-linear Fredholm maps of index $n\leq 1$ in terms of computable numerical invariants. 

In this paper we relate non-linear proper Fredholm maps with the stable homotopy groups of spheres. The key idea is that, given any Euclidean space $\R^n$, the direct sum $\R^n \oplus \H$ is isomorphic to $\H$. Consequently we obtain a suspension map:
\begin{equation}\label{eq:Suspension}
\mathcal{S}:[\R^{n+k},\R^k]^\prop \to \F_n^\prop[\H],\quad [f] \mapsto [f\oplus \id_\H],
\end{equation}
where $\F_n^\prop[\H]$ denotes proper homotopy classes of index-$n$ proper Fredholm maps from $\H$ to itself.
It is shown in \cite{Rot} that for $k$ sufficiently large $[\R^{n+k},\R^k]^\prop$ is independent of $k$, and isomorphic to the $n$-th stable homotopy group of spheres $\pi_n^S$. We obtain another suspension map, which is the central object of this paper
\begin{equation}\label{eq:SuspensionMap}
\mathcal{S}:\pi^S \to \F^\prop[\H].
\end{equation}
Here we denote $\pi^S:=\oplus_n\pi_n^S$ and $\F^\prop[\H]=\bigcup_n \F_n^\prop[\H]$.
The main result about this map reads as follows:
\begin{theorem}
  \label{thm:main}
The suspension map induces a bijection
\[\mathcal{S}:\pi^S/{\sim}\tois \F^{\prop}[\H],\]
where $[f]\sim[g]$ if and only if $[f]=[g]$ or $[f]=-[g]$ in $\pi^S$.
\end{theorem}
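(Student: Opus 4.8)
The plan is to check first that $\mathcal{S}$ is defined on $\pi^S/{\sim}$, and then to invert it by a finite-dimensional reduction of proper Fredholm maps and homotopies. For well-definedness: a proper homotopy $\R^{n+k}\times[0,1]\to\R^k$ suspends to a proper Fredholm homotopy, so $\mathcal{S}$ respects proper homotopy; the construction is independent of $k$ because $\R\oplus\H\cong\H$; and it is independent of the chosen identifications $\R^{n+k}\oplus\H\cong\H\cong\R^k\oplus\H$, since any two differ by an element of $\GL(\H)$, which is path-connected by Kuiper's theorem, and composing a suspended map with a path in $\GL(\H)$ is again a proper Fredholm homotopy. This same principle kills the sign: if $r:\R^{n+k}\to\R^{n+k}$ is a linear reflection then $[\phi\circ r]=-[\phi]$ in $\pi_n^S$, whereas $\mathcal{S}(\phi\circ r)=(\phi\oplus\id_\H)\circ(r\oplus\id_\H)$ and $r\oplus\id_\H$ is joined to $\id_\H$ inside $\GL(\H)$, so $\mathcal{S}(-[\phi])=\mathcal{S}([\phi])$ and $\mathcal{S}$ descends to $\pi^S/{\sim}$.

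For surjectivity, let $f:\H\to\H$ be proper Fredholm of index $n$. After a smoothing reduction I would pick, by Smale's version of Sard's theorem, a regular value, say $0$; then $M:=f^{-1}(0)$ is a compact $n$-manifold — compactness being exactly properness — its normal bundle in $\H$ is canonically trivialized by $df$, and properness forces $f^{-1}(U)$ to lie inside an arbitrarily thin tubular neighbourhood of $M$ once the ball $U\ni 0$ is small enough. On such a tube $df$ is onto, so a Hilbert-manifold tubular-neighbourhood/straightening argument brings $f$ into the split normal form $\phi\oplus\id_{\H'}$ near $M$, with $\phi:\R^{n+k}\to\R^k$ proper and $\H\cong\R^{n+k}\oplus\H'\cong\R^k\oplus\H'$; a patching argument off the tube, using contractibility of $\GL(\H)$, then upgrades this to a global proper Fredholm homotopy $f\simeq\phi\oplus\id_{\H'}=\mathcal{S}([\phi])$.

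For injectivity, suppose $\mathcal{S}([\phi])=\mathcal{S}([\psi])$ with $\phi,\psi:\R^{n+k}\to\R^k$ proper — common large $k$, by the first paragraph — witnessed by a proper Fredholm homotopy $H:\H\times[0,1]\to\H$. Running the compression of the previous paragraph for $H$ relative to its endpoints produces a finite-dimensional proper homotopy between $\phi$ and $\psi$, with the single caveat that the identifications of the finite-dimensional source (resp.\ target) slices over $0$ and over $1$ need only agree up to an element of $\GL(\R^{n+k})$ (resp.\ $\GL(\R^k)$), hence up to a reflection. Therefore $\phi$ is properly homotopic to $\psi$ pre- and/or post-composed with a reflection, so $[\phi]=\pm[\psi]$ in $\pi_n^S$; combined with surjectivity this gives the bijection $\mathcal{S}:\pi^S/{\sim}\tois\F^{\prop}[\H]$.

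The technical heart — and the main obstacle — is the finite-dimensional reduction underlying the last two steps: carrying out a smoothing that preserves properness; producing and globalizing the tubular-neighbourhood normal form in an infinite-dimensional Hilbert manifold while keeping the map proper and Fredholm throughout; and, for the homotopy, doing all of this relative to endpoints so that exactly the $\pm$-ambiguity, and nothing coarser, survives. It is the hypothesis of properness that confines the nonlinear part of $f$ to a bounded region and thereby makes the associated finite-dimensional class well-defined up to sign.
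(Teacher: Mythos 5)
Your outline correctly identifies the shape of the argument (reduce to finite dimensions, recognize that only a sign survives), and the paper does indeed route everything through a finite--dimensional compression. But there is a genuine gap at the heart of both the surjectivity and the injectivity steps: you never identify \emph{what obstructs} the compression, and therefore cannot actually carry it out, nor can you see that the residual ambiguity is exactly $\Z/2\Z$ and not something coarser.

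Concretely, for surjectivity the issue is that the differential $\d f:\H\to\Phi_n(\H)$ is a map into a space with the homotopy type of $\Z\times BO$, not into anything contractible. Your statement that ``its normal bundle in $\H$ is canonically trivialized by $\d f$'' is true, but a trivialization of $\nu M$ over $M$ is far weaker than what is needed: you must deform the entire map $\d f$, on a neighbourhood of $M$ and keeping $\ker\d f|_M=TM$, so that its image lies in the \emph{finite--type} operators $\Phi_n(\infty)$, i.e.\ block form $\widehat A\oplus S_n$. This is precisely what ``split normal form $\phi\oplus\id_{\H'}$'' means, and it is not a tubular--neighbourhood statement. The paper handles it by restricting $A=\d f$ to a ball $Z\supset X$ and invoking the relative Atiyah--J\"anich theorem: the obstruction lives in $K(Z)\cong\Z$ and vanishes for index reasons, so $A|_Z$ can be homotoped (preserving the kernel along $X$) into $\Phi_n(\infty)$. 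Without this, your ``straightening argument'' is an assertion, not a proof.

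For injectivity the gap is sharper: you assume the homotopy $H$ can be compressed \emph{relative to its endpoints} and merely note an afterthought ambiguity ``up to a reflection''. In fact the rel--endpoints compression has a genuine obstruction, namely the relative index in $K(I\times Z,\partial I\times Z)\cong K(I,\partial I)\cong\Z/2\Z$, and the paper shows that when it is nonzero one must twist one endpoint by a path $\tau:I\to\GL(\H)$ from $\id$ to $(-1)\oplus\id$, producing the cobordism $(I\times X_1,\tau A_1)$ whose relative index is the nontrivial class; composing kills the obstruction and identifies $[X_0]=-[X_1]$. Your proposal gets the right answer ($\pm$) only because you postulate it, not because you compute the obstruction group. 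Finally, note that the paper does not perform the compression at the level of nonlinear maps directly: it first passes through the infinite--dimensional Pontryagin--Thom correspondence $\F^\prop[\H]\cong\Omega^{\fr}(\H)$ of \cite{AbbondandoloRot}, and then applies the relative Atiyah--J\"anich theorem to the framing alone. That translation is what makes the compression a purely linear, $K$--theoretic question, and it is the missing ingredient in your sketch.
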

The group structure on $\pi^S$ does not descend to the quotient. Nevertheless the multiplicative structure, coming from composition of maps, remains. This turns the set $\pi^S/{\sim}$ into a monoid, and the suspension map is an isomorphism of monoids.
Nishida's theorem \cite{Nishida} states that any element of $\pi^S_n$, $n > 0$, is nilpotent implying the following corollary:

\begin{corollary}
Let $f:\H\rightarrow \H$ be a proper Fredholm map of index $n\not= 0$. Then there is a number $k\in \N$ such that $f^k=f\circ\ldots \circ f$ is proper Fredholm homotopic to a non-surjective map. 
\end{corollary}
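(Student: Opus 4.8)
The plan is to transport the statement to stable homotopy theory via Theorem~\ref{thm:main} and then apply Nishida's theorem. First I would use the surjectivity in Theorem~\ref{thm:main} to choose a stable class $\alpha\in\pi_n^S$ with $\mathcal S([\alpha])=[f]$. Since $\mathcal S\colon\pi^S/{\sim}\,\to\F^\prop[\H]$ is an isomorphism of monoids, where the monoid operations are the composition product on $\pi^S$ and composition of Fredholm maps on $\F^\prop[\H]$, iterating $f$ corresponds to taking powers:
\[
  [f^k]=\big[\,\underbrace{f\circ\cdots\circ f}_{k}\,\big]=\mathcal S([\alpha])^{k}=\mathcal S\big([\alpha^{k}]\big),
\]
where $\alpha^{k}\in\pi^S_{nk}$ is the $k$-fold product of $\alpha$ with itself. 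Note that $nk\neq 0$, since $n\neq 0$ and $k\geq 1$.

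Next I would feed in Nishida's theorem. If $n>0$, every element of $\pi_n^S$ is nilpotent, so there is $k\in\N$ with $\alpha^{k}=0$ in $\pi^S_{nk}$; if $n<0$, then $\pi^S_n=0$, so $\alpha=0$ and one may take $k=1$. In either case $[f^{k}]=\mathcal S([0])$ for a suitable $k$, so it suffices to exhibit a non-surjective proper Fredholm map representing the class $\mathcal S([0])\in\F^\prop_{nk}[\H]$.

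Finally I would choose the representative of $0\in\pi^S_{nk}$ with care. For $m$ large the suspension is computed from a proper map $g\colon\R^{nk+m}\to\R^m$, and by \eqref{eq:Suspension} one has $\mathcal S([g])=[g\oplus\id_\H]$ under the identifications $\R^{nk+m}\oplus\H\cong\H\cong\R^m\oplus\H$. When $nk>0$ I would take $g(x)=\|x\|^2 e_1$: it is proper, its image lies in the ray $\R_{\geq 0}e_1$, and its one-point compactification $S^{nk+m}\to S^m$ has image inside the arc $\R_{\geq 0}e_1\cup\{\infty\}\subseteq S^m$, which deformation retracts to the basepoint $\infty$, so $[g]=0$. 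When $nk<0$ I would take $g$ to be a linear inclusion, for which $[g]=0$ automatically as $\pi^S_{nk}=0$. In both cases the image of $g\oplus\id_\H$ is contained in $\im(g)\oplus\H\subsetneq\R^m\oplus\H$, so $g\oplus\id_\H$ is not surjective, while $[g\oplus\id_\H]=\mathcal S([0])=[f^{k}]$; this finishes the proof.

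The one step requiring a genuine (if short) argument is the last: realizing the trivial stable class by a proper Fredholm map with small image. I do not anticipate serious difficulties; the only conceptually delicate point, that composing $f$ with itself corresponds to multiplication in the graded ring $\pi^S$, is already built into the assertion that $\mathcal S$ is an isomorphism of monoids, and everything else is a formal consequence of Theorem~\ref{thm:main} together with Nishida's theorem.
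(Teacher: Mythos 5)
Your proof is correct and follows exactly the argument the paper leaves implicit (the paper merely states that Nishida's theorem together with Theorem~\ref{thm:main} implies the corollary): transport $[f]$ to a class $\alpha\in\pi_n^S$ via surjectivity and the monoid isomorphism, apply Nishida for $n>0$ and $\pi_n^S=0$ for $n<0$ to get $\alpha^k=0$, and realize $\mathcal S([0])$ by a non-surjective finite-type representative. The one step you rightly flag as needing an argument — exhibiting a non-surjective representative of the trivial class — is handled cleanly by $g(x)=\|x\|^2e_1$; for $nk<0$ one could alternatively just observe that by the Sard--Smale theorem any proper Fredholm map of negative index has nowhere dense image and so is automatically non-surjective, making the $k=1$ case immediate.
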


The inverse of $[f] \in \pi^S$ is represented by $\tau \circ f$, where $\tau$ is a reflection in the first coordinate. 
The suspension of $\tau$ is an invertible linear map which, since $\GL(\H)$ is contractible cf.~\cite{Kuiper}, is proper Fredholm homotopic to the identity. Consequently $\mathcal{S}[f] = \mathcal{S}[\tau \circ f] \in \F^\prop[\H]$, explaining the appearance of the equivalence relation in Theorem~\ref{thm:main}.

We can distinguish between these maps by considering oriented proper Fredholm maps and oriented homotopies between them. We denote the resulting space of homotopy classes by $\FO^\prop[\H]$. In this case Theorem \ref{thm:main} has the following consequence:
\begin{theorem}\label{thm:MainTheoremOriented}
The suspension map induces a bijection
\[ \mathcal{S}:\pi^S \tois \FO^\prop[\H],\]
for any $n \in \Z$.
\end{theorem}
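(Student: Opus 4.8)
The plan is to promote the bijection of Theorem~\ref{thm:main} to the oriented category: I would use the unoriented statement to obtain surjectivity, and build an explicit Pontryagin--Thom left inverse to obtain injectivity. Recall that an orientation of a proper Fredholm map $f\colon\H\to\H$ is an orientation of the determinant line bundle $\Det(df)$, with fibre $\Lambda^{\max}\ker df_x\otimes(\Lambda^{\max}\coker df_x)^*$ over $x\in\H$; since $\H$ is contractible this bundle is trivial, so $f$ carries exactly two orientations, interchanged by $\mathfrak{o}\mapsto-\mathfrak{o}$, and likewise every proper Fredholm homotopy over $\H\times[0,1]$ is orientable. Thus forgetting orientations gives a surjection $\FO^\prop[\H]\to\F^\prop[\H]$ realising $\F^\prop[\H]$ as the quotient by orientation reversal.

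First I would make the suspension oriented. For proper smooth $g\colon\R^{n+k}\to\R^k$ one has $d(\mathcal{S}g)_x=dg_x\oplus\id_\H$, so $\Det(d(\mathcal{S}g))\cong\Det(dg)$ canonically; the fixed orientations of $\R^{n+k}$ and $\R^k$ orient $\ker dg_x$ and $\coker dg_x$ through the exact sequences $0\to\ker dg_x\to\R^{n+k}\to\im dg_x\to0$ and $0\to\im dg_x\to\R^k\to\coker dg_x\to0$ (independently of complements), hence orient $\Det(dg)$ by an orientation $\mathfrak{o}_g$. I would then check that $[g]\mapsto(\mathcal{S}g,\mathfrak{o}_g)$ descends to $\mathcal{S}\colon\pi^S\to\FO^\prop[\H]$: a proper homotopy suspends and carries its canonical orientation to the endpoints, and the stabilisation $g\rightsquigarrow g\oplus\id_\R$ leaves $g\oplus\id_\H$ unchanged while the extra standard orientation factors on $\R^{n+k}\oplus\R$ and $\R^k\oplus\R$ cancel under a fixed ordering convention. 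By construction the composite $\pi^S\xrightarrow{\mathcal{S}}\FO^\prop[\H]\to\F^\prop[\H]$ is the unoriented suspension, which by Theorem~\ref{thm:main} is $\pi^S\to\pi^S/{\sim}\tois\F^\prop[\H]$.

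Next I would show $\mathcal{S}$ intertwines negation on $\pi^S$ with orientation reversal on $\FO^\prop[\H]$. Since $-[g]=[\tau\circ g]$ and $\mathcal{S}(\tau g)=(\tau\oplus\id_\H)\circ\mathcal{S}g$, Kuiper's theorem \cite{Kuiper} supplies a path $A_t$ in $\GL(\H)$ from $\tau\oplus\id_\H$ to $\id_\H$, so $A_t\circ\mathcal{S}g$ is an unoriented proper Fredholm homotopy from $\mathcal{S}(\tau g)$ to $\mathcal{S}g$; under the induced identification of determinant lines $\mathfrak{o}_{\tau g}$ corresponds to $-\mathfrak{o}_g$, because $\tau$ reverses the standard orientation of the target $\R^k$ (hence of the cokernel factor) while leaving the kernel factor inside $\R^{n+k}$ untouched and $A_t$ stays in the connected group $\GL(\H)$, over which the determinant line is canonically trivialised. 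Hence $\mathcal{S}(-g)=(\mathcal{S}g,-\mathfrak{o}_g)$. Surjectivity of $\mathcal{S}$ now follows: given $(f,\mathfrak{o})\in\FO_n^\prop[\H]$, Theorem~\ref{thm:main} yields $g\in\pi_n^S$ and an unoriented homotopy from $f$ to $\mathcal{S}g$, along which $\mathfrak{o}$ transports to $\pm\mathfrak{o}_g$, so $(f,\mathfrak{o})$ equals $\mathcal{S}(g)$ or $\mathcal{S}(-g)$.

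For injectivity I would construct $\Phi\colon\FO^\prop[\H]\to\pi^S$ by Pontryagin--Thom: for $(f,\mathfrak{o})$ of index $n$ choose a regular value $y$ (Sard--Smale); then $M=f^{-1}(y)$ is a closed $n$-manifold with $TM=\ker df|_M$, the differential $df$ trivialises the normal bundle of $M$ in $\H$ (as $\coker df|_M=0$), and $\mathfrak{o}$ orients $TM$, so $M$ acquires a stable framing whose class $\Phi(f,\mathfrak{o})\in\Omega^{\fr}_n=\pi_n^S$ is independent of $y$ and of the oriented homotopy (a framed-cobordism argument in each case). By construction $\Phi(\mathcal{S}g)=[g]$ — this is exactly the identification $[\R^{n+k},\R^k]^\prop\cong\pi_n^S$ of \cite{Rot} read off at a regular value of $g$ — so $\Phi\circ\mathcal{S}=\id_{\pi^S}$, hence $\mathcal{S}$ is injective; together with surjectivity this proves the theorem (and forces $\Phi=\mathcal{S}^{-1}$). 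The main obstacle is this last step: making the framed-bordism invariant precise in the infinite-dimensional Fredholm setting and matching the orientation and framing conventions so that $\Phi$ is well defined and genuinely inverts the oriented suspension. If one instead has the proof of Theorem~\ref{thm:main} in hand, $\Phi$ is simply its orientation-aware refinement, and the task reduces to verifying that that proof never used the $\pm$-ambiguity.
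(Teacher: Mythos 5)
Your surjectivity argument is sound, and the observation that $\mathcal{S}$ intertwines negation on $\pi^S$ with orientation reversal on $\FO^\prop[\H]$ is a nice way to see it directly from Theorem~\ref{thm:main}. But the injectivity argument has a genuine gap, which you yourself flag as ``the main obstacle'' without resolving it — and that obstacle is precisely the technical content of the theorem.

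The difficulty is this: given $(f,\mathfrak{o})$ with regular value $y$, the preimage $M=f^{-1}(y)$ carries an $\H$-framing in the sense of Definition~\ref{def:framing}, i.e.\ a map $df\colon\H\to\Phi_n(\H)$ with $\ker df|_M=TM$. This is \emph{not} automatically a stable framing with class in $\Omega^\fr(\R^\infty)\cong\pi^S_n$; a priori it only gives a class in $\Omega^\fr(\H)$, and the whole point of Theorem~\ref{thm:MainTheoremCobordism} is that the comparison map $\Omega^\fr(\R^\infty)\to\Omega^\fr(\H)$ is two-to-one, the fibre being governed by the $\Z/2\cong K(I,\partial I)$ obstruction from Theorem~\ref{thm:AtiyahJanichRelative}. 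Likewise, well-definedness of your $\Phi$ under oriented homotopy is not ``a framed-cobordism argument'' one can wave at: an oriented homotopy $h$ between $\mathcal{S}g_0$ and $\mathcal{S}g_1$ produces a framed cobordism $(W,B)$ in $\H$, and one must show the orientation of $B$ forces this cobordism to be homotopic rel boundary to one of \emph{finite type}. The paper does exactly this: restricting $B$ over a ball $Z$ with $W\subset I\times Z$, the orientation makes the cokernel bundle $\coker_V B/\psi\to\S^1\times Z$ orientable, hence trivial, which kills the $\Z/2$ obstruction and lets Theorem~\ref{thm:AtiyahJanichRelative} homotope $B$ to finite type relative to the boundary. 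Without this step — identifying the obstruction and showing orientability annihilates it — the map $\Phi$ is not known to be well-defined, and injectivity does not follow. Your proposal defers exactly this step, so it does not constitute a proof.
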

In particular this defines a group structure on $\FO^\prop[\H]$. Both the proofs Theorem \ref{thm:main} and Theorem \ref{thm:MainTheoremOriented} are based on the infinite dimensional version of the Thom-Pontryagin construction obtained in \cite{AbbondandoloRot}. It states that proper Fredholm maps up to proper homotopy correspond to framed submanifolds of $\H$. 

In this setting a framing of an $n$-dimensional submanifold $X \subset \H$ consists of a map
\[ A: \H \to \Phi_n(\H),\]
to the space of Fredholm operators of index $n$, satisfying $\ker A|_X = TX$. The proof of our theorems then boils down to understanding when such a map is homotopic to one of ``finite type'', which means that its image is in $\Phi_n(\infty)\subset \Phi_n(\H)$, the space of linear Fredholm operators that are finite dimensional perturbations of the shift operator.   

To answer this question we introduce a relative version of the parametric Fredholm index, and derive the following generalization of the Atiyah-J\"anich theorem \cite{Atiyah,Janich}:

\begin{theorem}
\label{thm:AtiyahJanichRelative}  
  Let $(X,Y)$ be a pair with $X$ compact and connected and $Y$ closed in $X$, then for any $n$ the Fredholm index
\[ \ind:[(X,Y),(\Phi_n(\H),\Phi_n(\infty))] \to K(X,Y),\]
is a group isomorphism. Moreover, $\ind A = 0$ iff for any $k \in \N$ the map $A$ is homotopic, relative to $Y$ and preserving $\ker A \cap \underline{\R}^{n+k}$, to a map $B:X \to \Phi_n(\infty)$.
\end{theorem}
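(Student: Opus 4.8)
The plan is to reduce the statement to the classical Atiyah--J\"anich theorem by exploiting that the subspace $\Phi_n(\infty)$ is contractible, and then to upgrade a free homotopy to one relative to $Y$ by an elementary concatenate-and-straighten argument; the genuinely new work is the refinement involving $\ker A\cap\underline{\R}^{n+k}$. To begin, I would record that $\Phi_n(\infty)$ is contractible: it consists of the finite-rank perturbations $s_n+F$ of a fixed index-$n$ shift $s_n$, so it is an affine --- hence convex --- subset of $\mathcal{L}(\H)$, and the straight-line homotopy $(s_n+F,t)\mapsto s_n+(1-t)F$ retracts it onto $s_n$ without ever leaving the Fredholm operators of index $n$, since finite-rank (indeed compact) perturbations preserve these.

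Using in addition that $Y\hookrightarrow X$ is a cofibration (automatic for CW pairs, and arranged in general by a standard point-set reduction), collapsing $Y$ induces a bijection
\[
[(X,Y),(\Phi_n(\H),\Phi_n(\infty))]\ \tois\ [X/Y,\Phi_n(\H)],
\]
obtained by deforming a map of pairs so that $Y$ is sent to the point $s_n$ --- here one uses that $A|_Y$ is nullhomotopic in the contractible $\Phi_n(\infty)$ together with the homotopy extension property --- and the same argument carried out on $X\times I$ gives injectivity. Since $X/Y$ is compact and connected, the Atiyah--J\"anich theorem identifies $[X/Y,\Phi_n(\H)]$ with $K(X,Y)$ via the family index, and chasing the two identifications shows that the composite bijection is precisely the relative index $\ind$ defined above: over $Y$ the index bundle carries the canonical trivialisation coming from the contraction onto $s_n$, which is exactly what lets $\ind A$ be regarded as an element of $K(X,Y)$. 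As $\ind$ is a homomorphism for the additive structures induced by direct sum of Fredholm families, it is then a group isomorphism, proving the first assertion.

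For the ``moreover'' the implication $(\Leftarrow)$ is immediate: a homotopy relative to $Y$ into a map $B\colon X\to\Phi_n(\infty)$ is in particular a homotopy of pairs, so $[A]=[B]$, and $[B]=0$ because $B$ factors through the contractible space $\Phi_n(\infty)$; hence $\ind A=0$. For $(\Rightarrow)$, if $\ind A=0$ then $[A]=0$, so there is a homotopy of pairs $H\colon X\times I\to\Phi_n(\H)$ from $A$ to the constant map $s_n$ with $H(Y\times I)\subset\Phi_n(\infty)$. I would then extend the reverse of the path $H|_{Y\times I}$ over all of $X$ \emph{inside} $\Phi_n(\infty)$ --- which is harmless, since there $\Phi_n(\infty)$ is merely a target for the homotopy extension property --- starting from the constant map $s_n$; concatenating this extension with $H$ produces a homotopy of pairs from $A$ to a map $B'$ with $B'(X)\subset\Phi_n(\infty)$ and $B'|_Y=A|_Y$, whose restriction to $Y$ is a path composed with its own reverse and hence nullhomotopic rel endpoints in $\Phi_n(\infty)$. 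A further homotopy of homotopies on $X\times I\times I$, again extended by the homotopy extension property, straightens the $Y$-direction to be constant, yielding a homotopy from $A$ to $B'$ that is relative to $Y$ and lands in $\Phi_n(\infty)$.

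The hard part, and the genuinely new content beyond formal homotopy theory, is the additional requirement that the homotopy preserve $\ker A\cap\underline{\R}^{n+k}$. The plan here is to run the entire argument above inside the subspace of families in $\Phi_n(\H)$ that restrict on the distinguished $\underline{\R}^{n+k}$-summand to the same map as $A$ does, so that this part of the kernel is literally fixed along every homotopy --- possibly after first normalising $A$ there, which is where largeness of $k$ enters. One must then verify that these constrained operator spaces still carry the homotopy type making the Atiyah--J\"anich argument go through, that their finite-type parts are still contractible, and that everything is compatible as $k$ grows. Establishing this bookkeeping is the main obstacle; once it is in hand, the conclusion follows for every $k$ simultaneously.
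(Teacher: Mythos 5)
Your treatment of the first assertion (the isomorphism with $K(X,Y)$) runs along essentially the same lines as the paper: contractibility of $\Phi_n(\infty)$ plus the cofibration property to collapse $Y$ to a point and then invoke Atiyah--J\"anich. Two small points here: the paper is slightly more careful in that it first uses compactness of $Y$ to place $A(Y)$ inside a \emph{single} piece $\Hom(\R^{n+k},\R^k)$, so the cofibration used is the inclusion of that finite-dimensional contractible affine space (a CW-pair) rather than the colimit $\Phi_n(\infty)$, and it works with pointed homotopy classes $[(X/Y,[Y]),(\Phi_n(\H),S_n)]$ rather than free ones $[X/Y,\Phi_n(\H)]$. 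Neither of these affects the outcome, and your observation that $\Phi_n(\infty)$ is convex is a fine way to see contractibility.

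Your argument for the ``moreover'' without the kernel refinement --- extend the reverse of $H|_{Y\times I}$ into the contractible $\Phi_n(\infty)$, concatenate, and straighten the $Y$-direction by a second application of HEP --- is a legitimate alternative to the paper's route. The paper instead builds the homotopy explicitly: it shows that $\ind A = 0$ forces the bundles $A(V)$ and $A(V)^\perp$ (for $V=(\R^{n+k})^\perp$) to be trivial, produces a gauge $\tau\colon(X,Y)\to(\GL(\H),\id_\H)$ aligning them with $(\R^k)^\perp$ and $\R^k$, and then uses Kuiper's theorem twice: once to join $\id$ to $\tau$, and once to kill the block $D\colon(\R^{n+k})^\perp\to(\R^k)^\perp$ while linearly shrinking the off-diagonal block $C$ to $0$. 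Your formal argument is more elementary as pure homotopy theory; the paper's concrete construction buys exactly what you cannot reach abstractly, namely control of the kernel along the homotopy.

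The genuine gap is precisely the point you flag as ``the hard part'' and do not carry out: the requirement that the homotopy preserve $\ker A\cap\underline{\R}^{n+k}$. You propose to rerun the entire argument inside the subspace of operator families that agree with $A$ on a distinguished $\underline{\R}^{n+k}$-summand, deferring the verification that these constrained spaces have the right homotopy type; but this verification is not mere bookkeeping, and there is good reason to doubt the plan. Fixing the restriction to $\underline{\R}^{n+k}$ cuts out an affine --- hence contractible --- fiber in each $\Phi_n(\H)_x$, which would destroy the $\Z\times BO$ homotopy type that Atiyah--J\"anich relies on, so the ``same argument'' cannot simply be repeated in that ambient space. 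The paper does not constrain the ambient space at all; instead it arranges, after the Kuiper gauge $\tau$, a block form
\[
\tau A = \begin{pmatrix}\widehat{\tau A} & 0 \\ C & D\end{pmatrix}
\]
with respect to $\H=\R^{n+k}\oplus(\R^{n+k})^\perp$ and $\H=\R^k\oplus(\R^k)^\perp$, for which $\ker\tau A\cap\underline{\R}^{n+k}=\ker\widehat{\tau A}\cap\ker C$, and the explicit homotopy $\bigl(\begin{smallmatrix}\widehat{\tau A} & 0 \\ (1-t)C & D_t\end{smallmatrix}\bigr)$ visibly never moves that subspace out of the kernel. That explicit control is the content of the ``moreover'' clause and is what Section~4 of the paper later needs (``since $TX\subset\underline{\R}^{n+k}$ we can choose the homotopy to preserve $\ker A|_X$''); without it your proof does not establish the theorem as stated.
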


The organization of the paper is as follows. In Section~\ref{sec:AtiyahJanich} we set up our notation and prove Theorem~\ref{thm:AtiyahJanichRelative}. In Section~\ref{sec:NonLin} we discuss the relationship between non-linear Fredholm maps and framed bordism. We prove Theorem~\ref{thm:main} in Section~\ref{sec:MainCase}. In Section~\ref{sec:OrientedCase} we discuss how to assign orientations to Fredholm maps and prove Theorem~\ref{thm:MainTheoremOriented}.

\subsection*{Acknowledgements} 
We are grateful to Alberto Abbondandolo and Michael Jung for many fruitful discussions.
Lauran Toussaint is funded by the Dutch Research Council (NWO) on the project ``proper Fredholm homotopy theory'' (OCENW.M20.195) of the research programme Open Competition ENW M20-3.

\section{The relative Atiyah-J\"anich Theorem}
\label{sec:AtiyahJanich}
We fix once and for all an isomorphism 
\[\H \cong \ell^2 := \{(x_i)_{i\in \N} \mid x_i\in\R, \sum_{i=1}^\infty x_i^2 < \infty\}.\]
Of course, since $\H$ is a separable infinite dimensional Hilbert space such an isomorphism always exists, and is equivalent to the choice of a basis $\{e_i\}_{i \in \N}$. This yields a canonical inclusion of subspaces
\begin{equation}\label{eq:Subspaces}
 \R^n \subset \R^\infty \subset \H,
\end{equation}
where $\R^\infty := \colim_n\, \R^n$. 

The space of Fredholm operators is denoted by $\Phi_n(\H)$. The shift operator of index $n$ is defined by 
\[ S_n:\H \to \H,\quad S_n(e_i) := e_{i-n},\]
where we set $e_k=0$ for $k \leq 0$. Analogous to Equation~\eqref{eq:Subspaces} we have subspaces of linear maps:
\[ \Hom(\R^{n+k},\R^k) \subset \Phi_n(\infty) \subset \Phi_n(\H),\]
where $\Phi_n(\H)$ denotes the space of Fredholm operators of index $n$, and 
$\Phi_n(\infty) := \colim_k\, \Hom(\R^{n+k},\R^k)$ under the obvious inclusions.
We say that a Fredholm operator is of finite type if it is contained in the union
\begin{equation*}
\Phi(\infty) := \bigcup_{n \in \Z} \Phi_n(\infty).
\end{equation*}
More explicitly this means that $A \in \Phi(\H)$ is of finite type if there exists a decomposition \[A:\H = \R^{n+k} \oplus (\R^{n+k})^\perp \to \H = \R^k \oplus (\R^k)^\perp\] in which we can write
\begin{equation}\label{eq:FiniteType}
A = \begin{pmatrix} \widehat{A} & 0 \\ 0 & S_n \end{pmatrix}\quad \text{with } \widehat{A} \in \Hom(\R^{n+k},\R^k).
\end{equation}

Let $V \subset \H$ be a closed subspace with finite codimension and consider
\[ \Phi_V(\H) := \{ A \in \Phi(\H) \mid \ker A \cap V = 0\}.\]
This is an open subset of $\Phi(\H)$, and any $A \in \Phi(\H)$ is an element of $\Phi_{(\ker A)^\perp}(\H)$. The following lemma lists some of its basic properties which are easily verified, see also \cite[Appendix A]{Atiyah}.

\begin{lemma}\label{lem:BasicProperties}
Let $X$ be compact, and $A:X \to \Phi(\H)$ a continuous map. Then we have:
\begin{enumerate}[(i)]
    \item There exists a closed subspace $V \subset \H$ with finite codimension such that $A_x \in \Phi_V(\H)$ for all $x \in X$, and
    \[ \coker_VA := \bigcup_{x\in X} \H/A_x(V),\]
    is a (finite rank) vector bundle over $X$. We may choose $V=(\R^{{n+k}})^\perp$ for ${n+k}$ sufficiently large. 
  \item If $W\subset V$ is of finite codimension, then $\Phi_V(\H) \subset \Phi_W(\H)$ and for all $A\in \Phi_V(\H)$ we have the exact sequence
    \[
      0\to V/W\to\coker_WA\to \coker_V A\to 0
    \]
    and therefore 
    \[ \coker_WA \cong\coker_V A \oplus \underline{\R}^d,\]
    where $\underline{\R}^d$ denotes the trivial bundle of rank $d := \dim V/W$ over $X$.
\end{enumerate}
\end{lemma}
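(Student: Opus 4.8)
The plan is to follow the classical argument of \cite[Appendix~A]{Atiyah}, keeping track of the distinguished subspaces $\R^{n+k}$.

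For part (i) I would first produce the uniform subspace by a compactness argument. Fix $x\in X$. Since $\ker A_x$ is finite dimensional, the orthogonal projections onto the $\R^m$ converge to the identity uniformly on $\ker A_x$, so $\ker A_x\cap(\R^m)^\perp=0$ for all $m$ sufficiently large; that is, $A_x\in\Phi_{(\R^m)^\perp}(\H)$, and the same holds for every larger $m$ because $(\R^{m'})^\perp\subset(\R^m)^\perp$ when $m'\ge m$. As each $\Phi_V(\H)$ is open, this membership persists on a neighbourhood of $x$; covering the compact space $X$ by finitely many such neighbourhoods and taking the largest of the resulting integers, which we call $n+k$, yields one subspace $V:=(\R^{n+k})^\perp$ with $A_x\in\Phi_V(\H)$ for all $x\in X$.

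Next I would establish that $\coker_V A$ is a finite rank vector bundle, for any finite-codimension $V$ with $A_x\in\Phi_V(\H)$ for all $x$. The index is locally constant on $X$ (since $\Phi(\H)=\bigsqcup_n\Phi_n(\H)$), and since $\ker A_x\cap V=0$ the restriction $A_x|_V\colon V\to\H$ is an injective Fredholm operator of index $\ind A_x-\codim V$; hence $A_x(V)$ is closed of finite codimension and $\dim\H/A_x(V)=\codim V-\ind A_x$ is locally constant. For local triviality near a point $x_0$, pick a finite dimensional complement $C$ with $\H=A_{x_0}(V)\oplus C$. The continuous family $T_x\colon V\oplus C\to\H$, $(v,c)\mapsto A_xv+c$, consists of isomorphisms at $x_0$ (injectivity uses $\ker A_{x_0}\cap V=0$), hence on a neighbourhood $U\ni x_0$ since the isomorphisms form an open set; for $x\in U$ we get $\H=A_x(V)\oplus C$, so the quotient map restricts to an isomorphism $C\xrightarrow{\ \sim\ }\H/A_x(V)$. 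This is a local trivialisation $U\times C\to\coker_V A|_U$ whose transition maps, read off from $x\mapsto T_x^{-1}$, are continuous and linear.

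For part (ii): if $W\subset V$ has finite codimension then $\ker A\cap W\subset\ker A\cap V=0$, which yields both $\Phi_V(\H)\subset\Phi_W(\H)$ and, by part (i) applied to $W$, that $\coker_W A$ is a vector bundle. Fibrewise $W\subset V$ gives $A_x(W)\subset A_x(V)$, so the quotient map $\H/A_x(W)\to\H/A_x(V)$ is surjective with kernel $A_x(V)/A_x(W)$, which the injective operator $A_x|_V$ identifies with $V/W$; thus one obtains a short exact sequence of vector bundles over $X$ in which $V/W$ is the trivial bundle $\underline{\R}^d$, $d=\dim V/W$. As $X$ is paracompact this sequence splits, giving $\coker_W A\cong\coker_V A\oplus\underline{\R}^d$. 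The only step that is more than bookkeeping is the local triviality in part (i): one must check that ``$T_x$ is an isomorphism'' is an open condition on $x$ and that the chart, together with its inverse sending $[h]$ to the $C$-component of $T_x^{-1}h$, depends continuously on $x$; the index computation, the compactness argument, and the splitting of a short exact sequence of vector bundles over a compact base are all routine.
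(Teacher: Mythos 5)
Your proof is correct. The paper itself does not prove this lemma; it states that these properties ``are easily verified'' and refers to \cite[Appendix A]{Atiyah}, so there is no proof in the paper to compare against line by line. Your argument is the standard one: compactness of $X$ together with openness of each $\Phi_V(\H)$ produces a uniform $V=(\R^{n+k})^\perp$; the local trivialization via the family $T_x\colon V\oplus C\to\H$ (and openness of the set of isomorphisms in the operator norm) gives the bundle structure on $\coker_V A$; and the identification $A_x(V)/A_x(W)\cong V/W$ via the fibrewise injective $A_x|_V$ yields the short exact sequence in (ii), which splits since $X$ is compact. The auxiliary index computation $\dim\H/A_x(V)=\codim V-\ind A_x$ via $\ind(A_x\circ\iota_V)=\ind A_x+\ind\iota_V$ is also correct, and the observation that strong convergence of the projections $P_m\to\id$ becomes uniform on the finite-dimensional $\ker A_x$ is exactly the right way to obtain $\ker A_x\cap(\R^m)^\perp=0$ for $m$ large. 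This is the content of Atiyah's appendix, presented cleanly and completely; no gaps.
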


These properties imply that, given $A:X \to \Phi(X)$, the virtual bundle
\[ \ind A := [\H/V] - [\coker_V A] \in \K(X),\]
is well-defined and independent of $V$. Furthermore the K-theory class $\ind A$ depends only on the homotopy class of $A$. The celebrated Atiyah-J\"anich Theorem \cite{Atiyah,Janich} states that the index classifies maps into the Fredholm operators up to homotopy.

\begin{theorem}\label{thm:AtiyahJanich}
If $X$ is compact, the Fredholm index induces a group isomorphism
\[ \ind:[X,\Phi(\H)] \to \K(X).\]
\end{theorem}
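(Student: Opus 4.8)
The plan is to prove that $\ind$ is a bijection and then observe that it intertwines the direct-sum operation $[A]+[B]:=[A\oplus B]$ on $[X,\Phi(\H)]$ (defined using a fixed isometry $\H\oplus\H\cong\H$) with addition in $\K(X)$; additivity is immediate from $(\H\oplus\H)/(V\oplus W)=\H/V\oplus\H/W$ together with $\coker_{V\oplus W}(A\oplus B)=\coker_V A\oplus\coker_W B$, the unit being the class of $\id_\H$, so both sides become abelian groups and $\ind$ an isomorphism. Two standing inputs will be used: over a compact base every vector bundle is a direct summand of a trivial one and stably isomorphic bundles become isomorphic after adding a trivial summand, so every class in $\K(X)$ has the form $[E]-[\underline{\R}^m]$; and the description of $\ind$ through the subspaces $V=(\R^{n+k})^\perp$ furnished by Lemma~\ref{lem:BasicProperties}. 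Both halves of the argument ultimately rest on Kuiper's theorem that $\GL(\H)$ is contractible \cite{Kuiper}.

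For surjectivity, given $[E]-[\underline{\R}^m]$ I would write $E\oplus F=\underline{\R}^N$ with $F$ the orthogonal complement, let $P\colon X\to\Hom(\R^N,\R^N)$ be the family of orthogonal projections onto $E$, and — using that $F\oplus\underline{\H}$ is an $\H$-bundle, hence trivial by Kuiper — fix a trivialization $\psi_x\colon F_x\oplus\H\xrightarrow{\sim}\H$. After identifying $\R^N\oplus\H\cong\H\cong\R^m\oplus\H$, the family $A_x(v,w)=\bigl(0,\psi_x((I-P_x)v,w)\bigr)$ is Fredholm with $\ker A_x=E_x$ and image $0\oplus\H$; taking $V=0\oplus\H$ one reads off $\coker_V A\cong\underline{\R}^m\oplus F$, whence $\ind A=[\underline{\R}^N]-[\underline{\R}^m]-[F]=[E]-[\underline{\R}^m]$.

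For injectivity it suffices, $\ind$ being a homomorphism, to show that $\ind A=0$ forces $A$ to be nullhomotopic. Since $\ind A$ has virtual rank $n$ on $\Phi_n(\H)$, the hypothesis forces $A\colon X\to\Phi_0(\H)$. Choosing $V=(\R^k)^\perp$ by Lemma~\ref{lem:BasicProperties}(i), $\coker_V A$ is a rank-$k$ bundle with $[\coker_V A]=[\underline{\R}^k]$; shrinking $V$ to a suitable $W=(\R^{k'})^\perp$ and invoking Lemma~\ref{lem:BasicProperties}(ii) — this is where one must keep $V$ in the standard coordinate form — one may assume $\coker_V A\cong\underline{\R}^k$ is honestly trivial. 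Then $A_x|_V$ is injective with image of constant codimension $k$; letting $C_x=A_x(V)^\perp$ gives a trivial rank-$k$ bundle $C$, and replacing $A_x|_{V^\perp}$ by an isomorphism $V^\perp\xrightarrow{\sim}C_x$ assembled from trivializations of $C$ and of $V^\perp$ produces an invertible operator $B_x$. The affine homotopy between the two choices of $A_x|_{V^\perp}$ keeps $\ker(\cdot)\cap V=0$, hence stays Fredholm, and deforms $A$ to a map $B\colon X\to\GL(\H)$, which is nullhomotopic by Kuiper.

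I expect the main obstacle to be the passage, in both halves, between statements about vector bundles and statements about families of operators: trivializing $F\oplus\underline{\H}$ and $C$, and checking that the deformations never leave $\Phi(\H)$ (equivalently $\Phi_V(\H)$) — for which one needs the elementary fact that a bounded operator whose restriction to a finite-codimension subspace is injective with closed image of finite codimension is itself Fredholm. All of this is bookkeeping once Kuiper's theorem is granted; the single most delicate point is converting the stable triviality of $\coker_V A$ into genuine triviality while retaining enough control of $V$ to keep applying Lemma~\ref{lem:BasicProperties}.
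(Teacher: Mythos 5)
The paper does not actually prove this statement: Theorem~\ref{thm:AtiyahJanich} is quoted as the classical Atiyah--J\"anich theorem and cited to \cite{Atiyah,Janich} (the paper only proves the relative refinement, Theorem~\ref{thm:AtiyahJanichRelative}, taking the absolute case as input). So there is no internal proof to compare against; judged on its own, your argument is correct and is essentially the classical one from the cited sources, and it runs on the same circle of ideas the paper uses elsewhere: Lemma~\ref{lem:BasicProperties} to split off finite-dimensional data, and Kuiper's theorem to trivialize Hilbert bundles and to contract families of invertibles. Surjectivity via the family $A_x(v,w)=\bigl(0,\psi_x((I-P_x)v,w)\bigr)$ with kernel $E$ and $\coker_V A\cong\underline{\R}^m\oplus F$ checks out, as does injectivity: stabilize $V$ using Lemma~\ref{lem:BasicProperties}(ii) until $\coker_V A$ is honestly trivial, replace $A|_{V^\perp}$ by an isomorphism onto $A(V)^\perp$ through operators whose restriction to $V$ is untouched (hence Fredholm throughout), and finish with Kuiper. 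By contrast, the paper's own work at this point goes the other way: it \emph{uses} the absolute theorem plus the cofibration $\Hom(\R^{n+k},\R^k)\hookrightarrow\Phi_n(\H)$ to get the relative statement, so your proof supplies the black-boxed ingredient rather than an alternative route to anything proved in the paper.

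One logical wrinkle to fix in the write-up: you reduce injectivity to ``$\ind A=0$ implies $[A]$ is the unit class'', but for a homomorphism of \emph{monoids} a trivial kernel does not give injectivity, and at that stage you have not yet shown that $[X,\Phi(\H)]$ with the direct-sum operation is a group. The gap closes with your own ingredients in one line: given $A$, surjectivity yields $B$ with $\ind B=-\ind A$, so $\ind(A\oplus B)=0$ and your kernel step makes $[A\oplus B]$ the unit; hence every class is invertible, the monoid is a group, and the injectivity argument then goes through as written. Two cosmetic points: after shrinking $V$ to $W=(\R^{k'})^\perp$ the cokernel bundle has rank $k'$, not $k$; and in the surjectivity step you should fix once and for all the identifications $\R^N\oplus\H\cong\H\cong\R^m\oplus\H$ and note that conjugating the family by these fixed isomorphisms does not change the index class.
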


We are interested in homotopy classes of maps $A:(X,Y)\rightarrow (\Phi(\H),\Phi(\infty))$, i.e.~where the Fredholm operators are of finite type over $Y$. In order to derive an Atiyah-J\"anich theorem in this setting we recall the following quotient construction for vector bundles.
Given a pair $(X,Y)$, where $X$ is compact and  $Y$ is closed in $X$ consider
\[\Vect(X,Y) := \left\{ (E,{\varphi}) \mid E \to X \text{ a vector bundle}, {\varphi}:E\bigr|_Y \tois \underline{\R}^k\bigr|_Y\text{ a trivialization}\right\}.\]
The trivialization defines a projection
\[ \pi:E|_Y \to \R^k,\quad e \mapsto (\pr_2 \circ \varphi)(e),\]
where $\pr_2:\underline{\R}^k=Y \times \R^k \to \R^k$ denotes the second coordinate projection. As such we obtain a map:
\begin{equation}\label{eq:VectorbundleQuotient}
\Vect(X,Y) \to \Vect(X/Y,\{y\}),\quad (E,\varphi) \mapsto E/\varphi := E/{\sim},
\end{equation}
where $e \sim e'$ if and only if $e,e'\in E|_Y$ and $\pi(e) = \pi(e')$.

Going back to the Atiyah-J\"anich theorem; consider a map $A:(X,Y) \to (\Phi_n(\H),\Phi_n(\infty))$, and $V$ as in Lemma~\ref{lem:BasicProperties}.
It follows from Equation \eqref{eq:FiniteType} that, for any $y \in Y$, the composition of the inclusion and quotient map
\[ \R^k \hookrightarrow \H \xrightarrow{\pi} \H/A_y(V),\]
is an isomorphism. Therefore, we have canonical trivializations:
\[
\varphi:\underline{V}^\perp|_Y \tois \underline{\R}^{n+k},\quad \text{and} \quad \psi:\coker_VA|_Y \tois \underline{\R}^k,
\]
for some $k$ sufficiently large. As such we define the relative Fredholm index by
\begin{equation}\label{eq:IndexRelative}
\ind:[(X,Y),(\Phi(\H),\Phi(\infty))] \to K(X,Y),\quad [A] \mapsto [{\underline{V}}^\perp/{\varphi}] - [\coker_VA/\psi] - [\underline{\R}^n].
\end{equation}
We now prove Theorem~\ref{thm:AtiyahJanichRelative}, which we restate here for convenience. 

\begin{theorem*}
Let $(X,Y)$ be a pair with $X$ compact and connected and $Y$ closed in $X$, then for any $n$ the Fredholm index
\[ \ind:[(X,Y),(\Phi_n(\H),\Phi_n(\infty))] \to K(X,Y),\]
is a group isomorphism. Moreover, $\ind A = 0$ iff for any $k \in \N$ the map $A$ is homotopic, relative to $Y$ and preserving $\ker A \cap \underline{\R}^{{n+k}}$, to a map $B:X \to \Phi_n(\infty)$. 
\end{theorem*}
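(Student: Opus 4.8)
The plan is to reduce the relative statement to the absolute Atiyah--J\"anich Theorem~\ref{thm:AtiyahJanich} after simplifying the target pair. The crucial elementary observation is that $\Phi_n(\infty)=\colim_k\Hom(\R^{n+k},\R^k)$ is an increasing union of the affine, hence contractible, spaces $\Hom(\R^{n+k},\R^k)$ along closed inclusions, so it is weakly contractible: $\pi_j\Phi_n(\infty)=\colim_k\pi_j\Hom(\R^{n+k},\R^k)=0$. Assuming (as we may, all invariants in sight being homotopy invariant) that $(X,Y)$ is a CW pair, the homotopy extension property together with the contractibility of $\Phi_n(\infty)$ and of $\operatorname{Map}(Y,\Phi_n(\infty))$ lets one deform any map of pairs, through maps of pairs, until $Y$ is sent to the basepoint $S_n$, and likewise for homotopies. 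So the first step is to establish the bijection
\[
[(X,Y),(\Phi_n(\H),\Phi_n(\infty))]\ \cong\ [X/Y,\Phi_n(\H)]_{*}.
\]

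Next I would apply the absolute theorem to the compact space $X/Y$. It gives a bijection $\ind\colon[X/Y,\Phi(\H)]\to K(X/Y)$, and since $X/Y$ is connected, restricting it to based maps into the component $\Phi_n(\H)$ and subtracting the constant virtual-dimension-$n$ class $[\underline\R^n]$ identifies $[X/Y,\Phi_n(\H)]_{*}$ with $\widetilde K(X/Y)=K(X,Y)$. Tracing the canonical trivialisations $\varphi,\psi$ through this composite shows it is precisely the relative index~\eqref{eq:IndexRelative}, which yields the claimed bijection. For the group structure one observes that direct sum of Fredholm operators — via a fixed isometry $\H\oplus\H\cong\H$, conjugated by a shift to stay in index $n$ — makes $\Phi_n(\H)$ an $H$-space which $\ind$ carries to the addition of $K(X,Y)$; hence $\ind$ is an isomorphism of groups.

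For the last assertion, one direction is immediate: a homotopy rel $Y$ from $A$ to some $B\colon X\to\Phi_n(\infty)$ is a homotopy of pairs, so $\ind A=\ind B$, and $\ind B=0$ because for a finite-type $B$ every bundle in~\eqref{eq:IndexRelative} is trivial with its standard trivialisation. Conversely, suppose $\ind A=0$; fix $k$ with $V=(\R^{n+k})^\perp$ as in Lemma~\ref{lem:BasicProperties}, and put $E:=\coker_VA$, a rank-$k$ bundle with canonical trivialisation $\psi$ over $Y$. Reading off~\eqref{eq:IndexRelative}, vanishing of $\ind A$ says exactly that $(E,\psi)$ is stably trivial over $X/Y$; after enlarging $k$ (which by Lemma~\ref{lem:BasicProperties}(ii) merely adds a trivial summand to $E$) we may assume there is a bundle isomorphism $E\cong\underline\R^k$ over $X$ restricting to $\psi$ over $Y$, equivalently that the family of closed subspaces $A_x(V)^\perp\cong E_x$ is a trivial subbundle of $X\times\H$. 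Promoting this isomorphism to isometries of $\H$ and invoking the contractibility of $\GL(\H)$ \cite{Kuiper}, I would construct a homotopy $g_{t,\bullet}\colon X\to\GL(\H)$ with $g_{0,\bullet}=\id$, $g_{t,y}=\id$ for $y\in Y$, and $g_{1,x}\!\left(A_x(V)\right)=(\R^k)^\perp$. Post-composing $A$ with $g_{t,\bullet}$ is a homotopy rel $Y$ leaving $\ker A_x$ unchanged — in particular $\ker A\cap\underline\R^{n+k}$ is carried along in the kernel. Afterwards $A_x$ has the block shape $\left(\begin{smallmatrix}\hat A_x&0\\ C_x&B_x\end{smallmatrix}\right)$ with respect to $\R^{n+k}\oplus V\to\R^k\oplus(\R^k)^\perp$, with $B_x\colon V\to(\R^k)^\perp$ invertible. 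A homotopy through invertibles sends $B_x$ to the shift $S_n|_V$, and the linear homotopy scaling $C_x$ to $0$ then lands $A$ in $\Phi_n(\infty)$ in the standard form~\eqref{eq:FiniteType}; in both homotopies the vectors of $\ker A\cap\underline\R^{n+k}=(\ker\hat A_x\cap\ker C_x)\times0$ have vanishing $V$-component and are evaluated to $0$, so $\ker A\cap\underline\R^{n+k}$ stays in the kernel throughout. (For $n<0$ one argues symmetrically, or composes with $S_{-n}$ to reduce to $n\ge 0$.)

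The step I expect to be the main obstacle is this converse, and specifically the transition from the cohomological identity $\ind A=0$ to an honest homotopy of $A$: one must convert the stable triviality of the cokernel bundle into a genuine trivialisation matching $\psi$ over $Y$ (the source of the enlargement of $k$) and then realise the corresponding change of frame as a homotopy of $A$ rel $Y$ compatible with the kernel constraint. The real work is the bookkeeping — which summand of $\H$ each auxiliary isomorphism acts on, and the verification that every elementary homotopy keeps $\ker A\cap\underline\R^{n+k}$ inside the kernel of $A_t$; once these are in place the remainder is a formal consequence of Kuiper's theorem and the absolute Atiyah--J\"anich theorem.
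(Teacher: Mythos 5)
Your proof follows essentially the same route as the paper's: reduce the relative case to the absolute Atiyah--J\"anich theorem by collapsing $\Phi_n(\infty)$ to the basepoint $S_n$ via a cofibration/contractibility argument, and then, for the converse of the second claim, trivialize the cokernel bundle $\coker_V A$, use Kuiper's theorem to straighten the frame, and finish with the block decomposition $\bigl(\begin{smallmatrix}\widehat{\tau A}&0\\ C&D\end{smallmatrix}\bigr)$ and a linear homotopy killing $C$ while tracking the kernel. The only cosmetic differences are that you contract $Y$'s image inside $\Phi_n(\infty)$ (assuming a CW pair and HEP) whereas the paper instead uses that $Y$'s compact image lies in a genuinely contractible $\Hom(\R^{n+k},\R^k)$ whose inclusion is a cofibration, and that you make explicit the enlargement of $k$ to upgrade stable triviality to genuine triviality and the $H$-space structure underlying the group isomorphism, both of which the paper leaves implicit.
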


\begin{proof}
For each $n$ we choose the shift operator $S_n$ as the base-point of $\Phi_n(\H)$. For a compact pointed space $(X,x_0)$ Theorem \ref{thm:AtiyahJanich} implies that
\[ [(X,x_0),(\Phi_n(\H),S_n)] \cong \wtd{K}(X),\]
where the right-hand side denotes the reduced $K$-theory of $X$.

Since $Y$ is compact its image is contained in $\Hom(\R^{n+k},\R^k) \subset \Phi_n(\infty)$ for some $k$ sufficiently large. The set $\Hom(\R^{n+k},\R^k)$ is contractible and its inclusion into $\Phi_n(\H)$ is a cofibration. Hence there exists a homotopy $h_t:\Phi_n(\H) \to \Phi_n(\H)$ satisfying $h_0 = \id$ and $h_1(B)=S_n$ for all $B\in \Hom(\R^{n+k},\R^k)$. 
This implies
\[[(X,Y),(\Phi_n(\H),\Phi_n(\infty))] \cong [(X,Y),(\Phi_n(\H),S_n)].\]
Putting these observations together we obtain:
\begin{align*}
[(X,Y),(\Phi_n(\H),\Phi_n(\infty))] &\cong [(X,Y),(\Phi_n(\H),S_n)]\\
&\cong [(X/Y,[Y]),(\Phi_n(\H),S_n)] \cong \wtd{K}(X/Y) = K(X,Y).
\end{align*}
Note that the resulting isomorphism is precisely the index as defined in Equation \eqref{eq:IndexRelative}.
For the second claim observe that if $A:X \to \Phi(\infty)$ then $\ind A = 0 \in K(X,Y)$. Since the index is preserved under homotopy the first implication follows.

For the converse assume that $A:X \to \Phi_n(\H)$ satisfies $\ind A= 0 \in K(X,Y)$. Then {there exists $k\in \N$ such that $V=(\R^{n+k})^\perp$ is as in Lemma~\ref{lem:BasicProperties}}, and $\coker_VA$ is trivial.
In fact, together with Equation~{\eqref{eq:FiniteType}}, this shows that the bundle 
\[ {(A(V))^\perp := \bigcup_{x \in X} (A_x(V))^\perp}, \]
is trivial and ${(A(V))^\perp}|_Y = \underline{\R}^k$. {The bundle $A(V)$ is also trivial, as it is an infinite dimensional Hilbert bundle}. Hence, there exists $\tau:(X,Y) \to (\GL(\H),\id_\H)$ such that ${\tau (A(V))^\perp} = \underline{\R}^k$ {and $\tau A(V)=({\underline \R}^k)^\perp$} on the whole of $X$.

Kuiper's theorem~\cite{Kuiper} implies that $\GL(\H)$ deformation retracts onto $\id_\H$. As such, $A$ and $\tau A$ are homotopic relative to $Y$ and preserving the kernel. With respect to the decompositions $\H = \R^{n+k} \oplus (\R^{n+k})^\perp$ and $\H = \R^k \oplus (\R^k)^\perp$ we can write
\[ \tau A = \begin{pmatrix} \widehat{\tau A} & 0 \\ C & D \end{pmatrix},\]
where $C:(X,Y) \to (\Hom(\R^{n+k},\R^k),0)$ and $D:(X,Y) \to (\GL( (\R^{n+k})^\perp, (\R^k)^\perp), S_n)$.

Again using Kuiper's theorem and the identification $(GL((\R^{n+k})^\perp,(\R^k)^\perp),S_n) \cong (\GL(\H),\id_\H)$ we see there exists a homotopy $D_t$ from $D$ to the constant map with value $S_n$.
In turn this gives a homotopy
\[
\begin{pmatrix}
\widehat{\tau A} & 0 \\
(1-t)C & D_t
\end{pmatrix},\]
relative to $Y$, from $\tau A$ to 
\[
B := \begin{pmatrix}
\widehat{\tau A} & 0 \\ 
0 & S_n
\end{pmatrix}.
\]
Note that $\ker \tau A \cap \underline{\R}^{n+k} = \ker \widehat{\tau A} \cap \ker C$, so that the homotopy also preserves $\ker A \cap \underline{\R}^{n+k}$.
\end{proof}

When $Y = \emptyset$ we recover Theorem \ref{thm:AtiyahJanich}. Furthermore, if we do not assume $X$ to be connected nor $n$ to be fixed the above proposition implies that
\[ [(X,Y),(\Phi(\H),\Phi(\infty))] \simeq K(X,Y) \times \operatorname{Im}\left(\iota^*:H^0(X;\Z) \to H^0(Y;\Z)\right).\]
Here $\iota^*:H^0(X;\Z) \to H^0(Y;\Z)$ is induced by the inclusion $\iota:Y \to X$.

\section{Non-linear Fredholm maps and framed bordism}
\label{sec:NonLin}
\subsection{Non-linear maps} 
The space of smooth (non-linear) Fredholm maps from Hilbert space to itself is denoted by $\F(\H)$. There are inclusions
\[ C^\infty(\R^{n+k},\R^k) \subset \F_n(\infty) \subset \F_n(\H),\]
where the second inclusion is given by sending $f$ to $f\oplus \id_\H$ and $\F_n(\infty) := \colim_k C^\infty(\R^{n+k},\R^k)$. A Fredholm map is said to be of finite type if it is contained in the union
\[\F(\infty) := \bigcup_{n \in \Z} \F_n(\infty).\]
{Recall that a map is proper if preimages of compact sets are compact.} Two proper Fredholm maps $f,g\in \F^\prop(\H)$ are said to be proper homotopic if there exists a homotopy $h$ which is proper as a map $h:I\times \H \to \H$.
We denote by $\F^\prop[\H]$ the resulting set of homotopy classes. Note that proper homotopy is strictly stronger than the standard notion of homotopy, or even homotopy through proper maps. As shown in \cite[Corollary 1.2]{Rot} there is a bijection
\[ [\S^{n+k-1},\S^{k-1}] \tois [\R^{n+k},\R^k]^\prop.\]

 Under this bijection taking the suspension of a sphere (and maps between them) corresponds to taking the product of Euclidean space with $\R$ (and $\oplus \id_\R$ for maps). 
This yields the following commutative diagram:
\begin{equation*}
\begin{tikzcd}[
ar symbol/.style = {draw=none,"\textstyle#1" description,sloped},
isomorphic/.style = {ar symbol={\cong}},
]
\left[\S^{n+k-1},\S^{k-1}\right] \arrow[d,"\Sigma"] \arrow[r,"\sim"]& \left[\R^{n+k},\R^k\right]^\prop \arrow[d,"\oplus \id_\R"] \arrow[r,"\mathcal{S}"] & \F_n^\prop[\H]\\
\left[\S^{n+k},\S^k\right] \arrow[r,"\sim"] & \left[\R^{n+k+1},\R^{k+1}\right]^\prop \arrow[ur,swap,"\mathcal{S}"]&\\
\end{tikzcd}
\end{equation*}
Hence by the Freudenthal suspension theorem we obtain a well-defined map:
\begin{equation}\label{eq:SuspensionMaps}
\mathcal{S}:\pi_n^S \to \F^\prop_n[\H],
\end{equation}
which is precisely the map in Equation~\eqref{eq:SuspensionMap}.

\subsection{Framed submanifolds and cobordisms}
A framing of a manifold $X^n \subset \R^{n+k}$ is a trivialization of its normal bundle, or equivalently a map
\begin{equation}\label{eq:ClassicalFraming}
A:X \to \Hom(\R^{n+k},\R^k)\quad \text{satisfying}\quad \ker A = TX.
\end{equation}

Since the space of linear maps is contractible $A$ always extends to a map $A:\R^{n+k}\to\Hom(\R^{n+k},\R^k)$, defined on the whole of $\R^{n+k}$.
Often we are interested in such manifolds up to stabilization; a stably framed manifold consists of a closed $n$-dimensional submanifold $X \subset \R^\infty$, together with a map
\begin{equation}
  \label{eq:ExtendedFraming}
  A:\R^\infty \to \Phi_n(\infty) \quad \text{satisfying} \quad \ker A = TX,
\end{equation}
and depending only on the first $n+k$ coordinates of $\R^\infty$ for some $k\in \N$. Stably framed cobordisms are defined analogously and the resulting cobordism ring $\Omega^\fr(\R^\infty)$ coincides with the usual one in the literature, e.g. \cite[Definition 8.12]{DavisKirk}.
Submanifolds of Hilbert space are framed using $\Phi(\H)$ which has the homotopy type of $\Z \times BO$. As such, an extension does not always exist, and instead it becomes part of the definition:

\begin{definition}
  \label{def:framing}
A framing of an $n$-dimensional closed submanifold $X \subset \H$ is a continuous map
\[ A:\H \to \Phi_n(\H),\]
such that $\ker A_x = T_xX$ for every $x \in X$. 
\end{definition}
We use the convention that the empty set is a manifold of any dimension. Hence the definition above also makes sense for negative $n$, in which case a framing is just a map $A:\H \to \Phi_n(\H)$. Framed cobordisms are defined analogously. 
\begin{definition} 
\label{def:cobordism}
  A cobordism between framed $n$-dimensional submanifolds $(X_0,A_0),(X_1,A_1) \subset \H$ is a compact $(n+1)$-dimensional manifold $W \subset I \times \H$ with boundary such that: 
\[
\partial W \subset \partial I \times \H, \qquad W \cap \left([0,\varepsilon) \times \H\right) = [0,\varepsilon) \times X_0, \qquad W \cap \left((1-\varepsilon,1]\times \H\right) = (1-\varepsilon,1] \times X_1,
\]
for some $\varepsilon > 0$, together with a framing; meaning a continuous map
\[
B: I\times \H \rightarrow \Phi_{n+1}(\R\oplus \H, \H)
\]
satisfying
\[
\ker\,B_{(t,x)} = T_{(t,x)} W \qquad \forall (t,x)\in W,
\]
and for every $t_0\in [0,\epsilon)$, $t_1\in (1-\epsilon,1]$ and $x\in \H$ we have 
\[
B_{(t_0,x)}(s,u) = A_0 (u) \qquad  B_{(t_1,x)}(s,u) = A_1(u).
\]
The set of framed submanifolds up to framed cobordism is denoted by $\Omega^\fr(\H)$. 
\end{definition}
Any submanifold of $\R^\infty$ can be seen as a submanifold of $\H$ and any framing as in~\eqref{eq:ExtendedFraming} extends uniquely to a framing as in Definition~\ref{def:framing}. Hence we obtain a suspension map for framed submanifolds:
\begin{equation}\label{eq:SuspensionSubmanifold}
\stab:\Omega^\fr(\R^\infty) \to \Omega^\fr(\H).
\end{equation}
We say that a framed submanifold $(X,A) \subset \H$ is of finite type if it is in the image of the map above. 

The classical stable Pontryagin-Thom construction gives an isomorphism
\[\pi^S\cong \Omega^\fr(\R^\infty) ,\]
between the stably framed cobordism ring and the stable homotopy groups of spheres, see for example \cite[Corollary 8.10]{DavisKirk}.
Similarly, to a proper Fredholm map $f\in \F^\prop(\H)$ we assign the submanifold $f^{-1}(y) \subset \H$, for some regular value $y$, which is canonically framed by $\d f: \H \to \Phi(\H)$.
As proven in~\cite[Theorem 2]{AbbondandoloRot} this induces an isomorphism
\[\F^\prop[\H] \cong \Omega^\fr(\H).\]
Under these isomorphisms suspending (homotopy classes) of maps, as in Equation~\eqref{eq:SuspensionMaps}, corresponds to suspending framed submanifolds as in Equation~\eqref{eq:SuspensionSubmanifold}. The situation is summarized in the following diagram:
\begin{equation}\label{eq:Diagram}
\begin{tikzcd}[
ar symbol/.style = {draw=none,"\textstyle#1" description,sloped},
isomorphic/.style = {ar symbol={\cong}},
]
\Omega^\fr(\R^\infty) \arrow[d,isomorphic] \arrow[r,"\mathcal{S}"] & \Omega^\fr(\H) \arrow[d,isomorphic]\\
\pi^S \arrow[r,"\mathcal{S}"] & \F^\prop[\H] \\
\end{tikzcd}
\end{equation}

\section{Proof of Theorem \ref{thm:main}}
\label{sec:MainCase}
In light of the diagram above, Theorem \ref{thm:main} is an immediate consequence of the following result.
\begin{theorem}{\label{thm:MainTheoremCobordism}}
The $\H$-suspension map induces a bijection
\[\stab:\Omega^\fr(\R^\infty)/{\sim} \to \Omega^{\fr}(\H),\]
where $[X_0,A_0] \sim [X_1,A_1]$ if and only if $[X_0,A_0] = \pm [X_1,A_1]$. 
\end{theorem}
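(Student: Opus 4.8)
The plan is to show that $\mathcal{S}$ descends to the quotient $\Omega^\fr(\R^\infty)/{\sim}$ and that the induced map is a bijection, treating surjectivity and injectivity separately. Both halves rest on the same three ingredients: Theorem~\ref{thm:AtiyahJanichRelative}, Kuiper's theorem, and the elementary fact that a compact submanifold of $\H$ — and, relative to a collar of its boundary, a compact cobordism in $I\times\H$ — can be isotoped into $\R^\infty$ with its framing transported along.

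\textbf{Well-definedness and surjectivity.} First, $\mathcal{S}[X,\tau\circ A]=\mathcal{S}[X,A]$: the $\H$-suspension of a reflection $\tau$ lies in $\GL(\H)$, which is contractible by Kuiper's theorem, so a path from $\id_\H$ to it gives a ``reflection cylinder'' framed cobordism; since $[X,\tau\circ A]=-[X,A]$ in $\Omega^\fr(\R^\infty)$ (the discussion after Theorem~\ref{thm:main}), $\mathcal{S}$ descends to $\Omega^\fr(\R^\infty)/{\sim}$. For surjectivity, start with a framed $(X,A)\subset\H$, $X$ compact. I would first isotope $X$ into some $\R^N\subset\R^\infty$; the trace of the isotopy, framed by carrying $A$ along, is a framed $\H$-cobordism, so we may assume $X\subset\R^\infty$. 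Because $A$ is defined on the contractible space $\H$, the restriction $A|_X$ is null-homotopic, hence $\ind(A|_X)=0\in K(X)$. Theorem~\ref{thm:AtiyahJanichRelative} with $Y=\emptyset$ and $n+k\ge N$ then produces a homotopy of $A|_X$ to a finite-type framing $B:X\to\Phi_n(\infty)$ preserving $\ker\cap\underline{\R}^{n+k}$; and since $\ker A=TX\subset\underline{\R}^{n+k}$, the off-diagonal block $C$ in that proof vanishes on $\ker\widehat{\tau A}$, so in fact $\ker$ stays equal to $TX$ along the whole homotopy. Extending this homotopy over $\H$ (the inclusion $X\hookrightarrow\H$ is a cofibration) and reading it as a cylinder cobordism $I\times X$ exhibits $[X,A]$ as framed cobordant to the finite-type pair $(X,B)$, hence in the image of $\mathcal{S}$.

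\textbf{Injectivity.} Suppose $(X_0,A_0),(X_1,A_1)$ are of finite type and $\mathcal{S}[X_0,A_0]=\mathcal{S}[X_1,A_1]$; passing to connected representatives, assume $X_0,X_1$ connected. Choose a framed $\H$-cobordism $(W,B)$ and isotope $W$, rel a collar of $\partial W$, into $I\times\R^\infty$. Applying Theorem~\ref{thm:AtiyahJanichRelative} to $B|_W:(W,\partial W)\to(\Phi_{n+1}(\R\oplus\H,\H),\Phi_{n+1}(\infty))$ with $Y$ that collar, $B$ is homotopic rel the collar — and, again because $\ker=TW\subset\underline{\R}^{(n+1)+k}$, preserving $\ker=TW$ — to a finite-type framing precisely when $\ind(B|_W)=0\in K(W,\partial W)$, in which case the modified cobordism descends to a framed cobordism in $\R^\infty$ and $[X_0,A_0]=[X_1,A_1]$. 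In general $\ind(B|_W)$ need not vanish; but $B$ extends over the contractible $I\times\H$, so its image in $K(W)$ is zero, forcing $\ind(B|_W)\in\im\big(\delta:K^{-1}(\partial W)\to K(W,\partial W)\big)$, and chasing the canonical trivializations in~\eqref{eq:IndexRelative} identifies it with the discrepancy between the orientations of the finite-type framings $A_0,A_1$ along $\partial W$ — the image of a class in $H^0(\partial W;\Z/2)$. Gluing a reflection cylinder onto the $t=1$ end then replaces $(X_1,A_1)$ by $(X_1,\tau\circ A_1)$ without changing the $\H$-cobordism class and shifts $\ind(B|_W)$ by exactly the generator of the $\Z/2$ attached to $X_1$; so after at most one such reflection the obstruction dies, the cobordism descends, and $[X_0,A_0]=\pm[X_1,A_1]$.

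\textbf{Main obstacle.} The delicate point is the middle of the injectivity argument: proving that $\ind(B|_W)$ carries nothing finer than the comparison of the orientations of $A_0$ and $A_1$ — equivalently that it always lies in the image under $\delta$ of the $\pi_0$-part $H^0(\partial W;\Z/2)\subset K^{-1}(\partial W)$ — and that a single endpoint reflection toggles precisely that class. This is where it is essential that $A_0,A_1$ are honest finite-type framings coming from $\Omega^\fr(\R^\infty)$ and that $W$ meets its collars as a product; a higher $K$-theoretic contribution to $\ind(B|_W)$ would be incompatible with the existence of the $\H$-cobordism in the first place. The remaining ingredients — the isotopy statements moving compact submanifolds and cobordisms into $\R^\infty$ carrying framings, and the exact kernel-preservation of the homotopies of Theorem~\ref{thm:AtiyahJanichRelative} in the situation at hand, where a priori only $\ker\cap\underline{\R}^{n+k}$ is kept fixed — are routine but should be recorded carefully.
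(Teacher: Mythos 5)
Your surjectivity argument is close in spirit to the paper's: apply Theorem~\ref{thm:AtiyahJanichRelative} to deform the framing to finite type, noting the index vanishes because $A$ factors through the contractible $\H$. The paper, however, applies the theorem on a closed ball $Z\supset X$ in $\R^{n+k}$ rather than on $X$ itself, and then extends over $\H$ via the deformation retraction of $\H$ onto $Z$. Applying the theorem on $X$ and extending by cofibration, as you do, leaves unaddressed whether the \emph{extended} framing on $\H$ is still of finite type and depends only on finitely many coordinates — the ball $Z$ is there precisely so that the homotoped framing is automatically controlled on a neighbourhood of $X$.

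For injectivity there is a genuine gap. You apply the relative Atiyah--J\"anich theorem to the pair $(W,\partial W)$ and then need to show that $\ind(B|_W)\in K(W,\partial W)$ can always be killed by a single endpoint reflection, i.e.\ that it lies in a $\Z/2$-subgroup coming from the orientation ambiguity. You identify this as "the delicate point'' and argue informally that a higher $K$-theoretic contribution "would be incompatible with the existence of the $\H$-cobordism,'' but that is not a proof: $K(W,\partial W)$ can be large (and need not be generated by $\pi_0$-data, especially if $W$ has closed components), and nothing you wrote pins the obstruction to the orientation class. The paper avoids this entirely by a simple enlargement: choose a closed ball $Z\subset\R^{n+k}$ with $W\subset I\times Z$ and apply Theorem~\ref{thm:AtiyahJanichRelative} to $B|_{I\times Z}$ on the pair $(I\times Z,\partial I\times Z)$, whose relative (real) $K$-theory is $K(I,\partial I)\cong\widetilde{KO}(S^1)\cong\Z/2$ since $Z$ is contractible. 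The obstruction is then literally a $\Z/2$ class, and gluing on the reflection cylinder $(I\times X_1,\tau A_1)$ visibly shifts it by the generator because $\coker_V\tau A_1/\psi$ has monodromy $(-1)\oplus\id$. Reworking your argument to run on $(I\times Z,\partial I\times Z)$ instead of $(W,\partial W)$ would close the gap and is the move you are missing.
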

\begin{proof}
\underline{Simplification:} 
We start by showing that, without loss of generality, we may assume all of of submanifolds to be contained in a finite dimensional Euclidean space. Note that this does not mean they are of finite type. To be precise we have:
\begin{itemize}
\item Any {$n$-dimensional} framed submanifold $(X_0,A_0)$ in $\H$ is framed cobordant to a framed submanifold $(X_1,A_1)$ satisfying $X_1 \subset {\R^{n+k}}$ for some {$k\in \N$}.
\item Let $(X_0,A_0),(X_1,A_1)$ be framed cobordant framed submanifolds with $X_0,X_1\subset \R^{n+k}$ for some ${k}\in \N$. Then there exists a framed cobordism $(W,B)$ from $(X_0,A_0),(X_1,A_1)$ satisfying $W\subset I\times \R^{n+k}$ for some $k\in\N$. 
\end{itemize}

To prove the first claim let $i:X\rightarrow \H$ be the inclusion and choose an embedding $j:X\rightarrow \H$ such that $j(X)\subset \R^{n+k}$ for some $k\in \N$ using Whitney's embedding theorem. Let $h_1,h_2:I\rightarrow X$ be given by
\[
h_1(t,x)=(1-t)i(x)+tS_{-N}i(x)\qquad h_2(t,x)=(1-t)S_{-N}(i(x))+tj(x).
\]
Both $i_1$ and $i_2$ are isotopies of embeddings. Therefore there exists a diffeomorphism $\varphi:\H\rightarrow \H$, isotopic to the identity, such that $\varphi(X)\subset \R^{n+k}$. By \cite[Lemma 3.7]{AbbondandoloRot2} the framed submanifold $(X,A)$ is framed cobordant to ($\varphi(X),A\circ \varphi d\varphi^{-1})$. The second claim is proved similarly, and we omit the details.

\underline{Surjectivity:}
Let $(X,A)$ be an $n$-dimensional framed submanifold of $\H$ such that $X\subset \R^{n+k}$ for some ${k} \in \N$, and $Z \subset \R^{{n+k}}$ a closed ball containing $X$. Since $Z$ is compact and contractible
\[ \K(Z) \cong \K(\point) \cong \Z,\]
corresponding to the Fredholm index of shift operators. 

By Theorem~\ref{thm:AtiyahJanichRelative} this implies that $A|_Z$ is homotopic to a map whose image is contained in $\Phi_n(\infty)$. Moreover, since $TX \subset \underline{\R}^{n+k}$ we can choose the homotopy to preserve $\ker A|_X$. The homotopy extends to the whole of $\H$, using that $Z$ is a deformation retract of $\H$. Thus, by~\cite [Remark 3.5]{AbbondandoloRot2}, $(X,A)$ is framed cobordant to a framed submanifold of finite type.

\underline{Injectivity:} Let $(W,B)$ be a framed cobordism between finite type framed submanifolds $(X_0,A_0)$ and $(X_1,A_1)$, and suppose that no such cobordism of finite type exists. We will show this implies that $[(X_0,A_0)] = - [(X_1,A_1)] \in \Omega^\fr(\R^\infty)$.
Assume that $W \subset I \times \R^{{n+k}}$ for some ${k} \in \N$, and fix a closed ball $Z \subset \R^{{n+k}}$ such that $W \subset I \times Z$.
Since $Z$ compact and contractible
\[ K(I \times Z, \partial I \times Z ) \cong K(I,\partial I ) \cong \Z/2\Z.\]
By Theorem \ref{thm:AtiyahJanichRelative} the assumption that $(W,B)$ cannot be of finite type implies that $\ind B|_Z$ corresponds to the non-trivial element in $\Z/2\Z$.

Fix a path $\tau:I\to \GL(\H)$ be a path satisfying $\tau(0) = \id_\H$ and $\tau(1) = (-1) \oplus \id_\H$.
Then, consider the framed cobordism
\[(I \times X_1,\tau A_1),\]
where $\tau A_1(t,x):= \tau(t) A_1(x)$. 
The index of $\tau A_1|_{I \times Z}$ corresponds to the non-trivial element of $\Z/2\Z$. To see this, recall Equation~\eqref{eq:IndexRelative} and note that in our case the bundle $\coker_V \tau A_1/\psi$ (for $V$ as in Lemma \ref{lem:BasicProperties} such that $V^\perp = \R^{n+k}$) is non-trivial since it has monodromy
\[ \tau(1)|_{\R^{n+k}} = (-1) \oplus \id_{\R^{n+k -1}}.\]
Also note that $(X_1,\tau(1)A_1)$ is again of finite type, and that since $\tau(1)|_{\R^{n+k}}$ is orientation reversing we have $[(X_1,\tau(1) A_1)] = - [(X_1,A_1)] \in \Omega^\fr(\R^\infty)$.

When composing framed cobordisms, the framings get concatenated (in the interval direction). 
As such, the framing $B \circ \tau A_1$ of the composition $(W,B) \circ (I \times X_1,\tau A_1)$ satisfies:
\[ \ind (B \circ \tau A_1) = \ind B + \ind \tau A_1 = 1 + 1 = 0,\]
using again the identification $K(I,\partial I) \cong \Z/2\Z$.
Together with Theorem \ref{thm:AtiyahJanichRelative} and the observations above this implies
\[
  [(X_0,A_1)] = [(X_1,\tau(1)A_1)] = -[(X_1,A_1)] \in \Omega^\fr(\R^\infty).
\]
\end{proof}

\section{Oriented Fredholm maps and the proof of Theorem~\ref{thm:MainTheoremOriented}}
\label{sec:OrientedCase}

To define oriented Fredholm maps we consider the determinant line bundle  $\Det \to \Phi(\H)$,
whose fiber over $A \in \Phi(\H)$ equals
\[\Det_A = \det(\ker A) \otimes \det(\coker A)^*,\]
where $\det V := \Lambda^{\operatorname{top}}V$, see also \cite{Quillen}. Given a map $A:\H \to \Phi(\H)$ the pullback bundle $A^*\Det$ is trivializable since $\H$ is contractible. Thus we can define an orientation of $A$ to be a choice of trivialization. 
In particular an orientation of a Fredholm map $f$ is an orientation of its differential
\[ \d f: \H \to \Phi(\H).\]

A homotopy between oriented proper Fredholm maps $f$ and $g$ is an oriented proper Fredholm map $h:I \times \H \to \H$ satisfying
\[ h|_{\{0\}\times \H} = f\quad \text{and}\quad h|_{\{1\}\times \H} = g,\]
as oriented maps. The set of all such homotopy classes is denoted by $\FO^\prop[\H]$.

Given a Fredholm map of finite type $f \in \F_{{n}}(\infty)$ there exists {$k\in \N$} and an exact sequence of vector spaces:
\[ 0 \to \ker \,\d f_{x} \xrightarrow{i} {\R}^{n+k} \xrightarrow{\d f_{x}|_{{\R}^{n+k}}} {\R}^k \xrightarrow{\pi} \coker \d f_{x} \to 0,\]
for any $x \in \H$.
As explained in \cite[Equation 5.6]{AbbondandoloMajer} this sequence yields an isomorphism
\begin{align}\label{eq:DeterminantIso}
\Det (\ker \d f) \otimes \Det (\coker \d f)^* &\tois \Det(\underline{\R}^{n+k}) \otimes \Det(\underline{\R}^k)^*\\
\alpha \otimes (\pi_* \gamma)^* &\mapsto (\iota_*\alpha \wedge \beta ) \otimes (\gamma \wedge \d f_*\beta)^*,\nonumber
\end{align}
where $\alpha$ generates $\Det(\ker \d f)$, $\iota_*\alpha \wedge \beta$ generates $\Det(\underline{\R}^{n+k})$, and $\gamma \wedge \d f_*\beta$ generates $\Det(\underline{\R}^k)$. Since $\underline{\R}^{n+k}$ and $\underline{\R}^k$ are canonically oriented this determines an orientation of $f$. 
This implies there exists suspension map:
\[\mathcal{S}:\F^\prop(\infty) \to \FO^\prop(\H).\]
The proof of Theorem \ref{thm:MainTheoremOriented} now follows from Theorem \ref{thm:main} together with the following lemma:
\begin{lemma}
Two maps $f,g \in \F^\prop_{{n}}(\infty)$ are proper homotopic if and only if $\mathcal{S}f$ and $\mathcal{S}g$ are oriented homotopic.
\end{lemma}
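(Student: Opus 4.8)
The plan is to prove both implications by tracking orientations through the constructions already in place. The ``only if'' direction is the straightforward one: if $f,g \in \F_n^\prop(\infty)$ are proper homotopic through some homotopy $h$, then $h$ is automatically an oriented proper Fredholm homotopy, because the orientation of any map of finite type is \emph{canonically} determined by the exact sequence \eqref{eq:DeterminantIso} and the canonical orientations of $\underline{\R}^{n+k}$ and $\underline{\R}^k$; these canonical orientations are compatible with suspension (adding one more coordinate to both sides via $\oplus \id_\R$ multiplies both determinant lines by the canonically oriented $\Det(\underline{\R})$), so the suspension $\mathcal{S}h$ carries the canonical orientation restricting to the canonical orientations of $\mathcal{S}f$ and $\mathcal{S}g$ at the endpoints. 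Hence $\mathcal{S}f$ and $\mathcal{S}g$ are oriented homotopic.

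For the ``if'' direction, suppose $\mathcal{S}f$ and $\mathcal{S}g$ are oriented homotopic. Forgetting orientations and using Theorem~\ref{thm:main} we already know that $[f]=\pm[g]$ in $\pi^S$; the task is to rule out the minus sign, i.e.\ to show that the ``wrong'' suspension $\mathcal{S}(\tau\circ g)$ carries the \emph{opposite} orientation to $\mathcal{S}g$, so that it cannot be oriented homotopic to $\mathcal{S}f$ unless the sign is in fact $+$. Concretely, I would pass to the framed-bordism picture of Section~\ref{sec:NonLin} and Theorem~\ref{thm:MainTheoremCobordism}: an oriented Fredholm map of finite type corresponds to a stably framed submanifold of $\R^\infty$ together with an orientation, and the suspension $\mathcal{S}$ is compatible with this. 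The discrepancy between $[g]$ and $-[g]$ in Theorem~\ref{thm:MainTheoremCobordism} is realized, exactly as in the injectivity argument there, by the finite-type cobordism $(I\times X_g,\tau A_g)$ whose relative index is the nontrivial class in $K(I,\partial I)\cong \Z/2\Z$; the monodromy $\tau(1)|_{\R^{n+k}}=(-1)\oplus\id$ is orientation-reversing on $\coker$, so the induced trivialization of $\Det$ via \eqref{eq:DeterminantIso} changes sign. Thus as \emph{oriented} finite-type framed submanifolds, $(X_g,A_g)$ and $(X_g,\tau(1)A_g)$ are not oriented-bordant (the orientations disagree), even though they are unoriented-bordant. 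Therefore, if $\mathcal{S}f$ and $\mathcal{S}g$ are oriented homotopic, the only possibility left by Theorem~\ref{thm:main} is $[f]=[g]$, i.e.\ $f$ and $g$ are proper homotopic.

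The key bookkeeping step, and the one I expect to be the main obstacle, is verifying carefully that the canonical orientation coming from \eqref{eq:DeterminantIso} is genuinely well-defined on $\F_n^\prop(\infty)$ (independent of the choice of $k$ and the decomposition $\H = \R^{n+k}\oplus(\R^{n+k})^\perp$) and that it is natural with respect to both suspension and composition of homotopies. The independence of $k$ reduces to checking that stabilizing the exact sequence by $\underline{\R}\xrightarrow{\id}\underline{\R}$ multiplies both sides of \eqref{eq:DeterminantIso} by $\Det(\underline{\R})\otimes\Det(\underline{\R})^*$ with its canonical (positive) generator, which is a sign computation of the type already used in \cite{AbbondandoloMajer}. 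Granting that, the argument above is essentially a translation of Theorem~\ref{thm:MainTheoremCobordism} into the oriented setting, and Theorem~\ref{thm:MainTheoremOriented} follows by feeding this lemma back into the proof of Theorem~\ref{thm:main}: the $\sim$-relation collapses precisely because orientations detect the sign.
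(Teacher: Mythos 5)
Your ``only if'' direction is correct and matches the paper: the canonical orientation of a finite-type homotopy $h$ coming from \eqref{eq:DeterminantIso} makes $\mathcal{S}h$ an oriented homotopy. Your observation about checking stability of \eqref{eq:DeterminantIso} under $k\mapsto k+1$ is a legitimate bookkeeping point (the paper defers to \cite{AbbondandoloMajer}).

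The ``if'' direction has a genuine gap. You pass to unoriented classes, obtain $[f]=\pm[g]$ via Theorem~\ref{thm:main}, and then try to rule out the minus sign by asserting that $(X_g,A_g)$ and $(X_g,\tau(1)A_g)$ are ``not oriented-bordant (the orientations disagree).'' But exhibiting one particular cobordism, $(I\times X_g,\tau A_g)$, whose framing cannot be oriented compatibly with the boundary orientations does not show that \emph{no} oriented framed cobordism between these two submanifolds exists; oriented bordism is an existential statement over all cobordisms, not over the one you wrote down. Worse, the statement you want (``no oriented cobordism between $(X_g,A_g)$ and $(X_g,\tau(1)A_g)$ when $2[g]\neq0$'') is precisely the contrapositive of the very ``if'' direction you are trying to prove, applied to $g$ and $\tau\circ g$; so the argument as written is circular. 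There is also a boundary case you do not address: when $2[g]=0$ and $[g]\neq 0$, the two submanifolds \emph{are} oriented bordant, so the blanket claim ``not oriented-bordant'' is simply false there, even though the overall conclusion $[f]=[g]$ still holds.

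The paper avoids this entirely by arguing directly on the given oriented cobordism $(W,B)$: it never invokes Theorem~\ref{thm:main}. After reducing to $W\subset I\times Z$ with $Z\subset\R^{n+k}$ a ball, the orientation of $B$ together with the exact sequence for $V=(\R^{n+k})^\perp$ forces $\coker_V B$ to be orientable compatibly with the boundary trivialization, so the quotient bundle $\coker_V B/\psi\to\S^1\times Z\simeq\S^1$ is orientable, hence trivial, hence the relative index $\ind B\in K(I,\partial I)\cong\Z/2$ vanishes. Theorem~\ref{thm:AtiyahJanichRelative} then homotopes $B$ rel boundary and rel kernel to a finite-type framing, giving a finite-type cobordism and hence a proper homotopy $f\simeq g$. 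This is a direct, non-circular argument in which the orientation hypothesis is used exactly once, to kill the $\Z/2$ obstruction that blocked the unoriented case.
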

\begin{proof}
If $h$ is a homotopy from $f$ to $g$, then its suspension $\mathcal{S}h$ is oriented by Equation~\eqref{eq:DeterminantIso} and hence defines an oriented homotopy from $\mathcal{S}f$ to $\mathcal{S}g$.

Conversely, let $h:I \times \H \to \H$ be an (oriented proper) homotopy from $\mathcal{S}f$ to $\mathcal{S}g$.
By the infinite dimensional Thom-Pontryagin construction \cite[Theorem 2]{AbbondandoloRot} the homotopy corresponds to a framed cobordism $(W,B)$ between the framed submanifolds induced by $f$ and $g$. The fact that $h$ is oriented means that the framing $B:I \times \H \to \Phi(\R\oplus\H,\H)$ is oriented.
As such, to show that $f$ and $g$ are homotopic it suffices to homotope $(W,B)$ relative to its boundary to a finite type cobordism.

The argument is similar to the proof of Theorem~\ref{thm:MainTheoremCobordism}. We may assume without loss of generality that $W \subset I\times\R^{{n+k}} \subset I\times\H$ for some $k \in \N$, and it suffices to homotope $B$ relative to the boundary to a framing of finite type, preserving the kernel along $W$. Let $Z\subset\R^{{n+k}}$ be a closed ball such that $W\subset I\times Z$. 

According to Lemma \ref{lem:BasicProperties} there exists an $N\in \N$ such that $V=(\R^N)^\perp$ satisfies $\ker B\cap V=0$. Without loss of generality we may choose $N=n+k$ by taking $k$ sufficiently large above. The condition $\ker B\cap V=0$ implies that the quotient map $\pi:\H \to \H/V$ restricts to an injection $\ker B \to \H/V$.
This gives an exact sequence of vector spaces
\[0 \to \ker B_{{x}} \to \H/V \to \coker_V B_{{x}} \to \coker B_{{x}} \to 0,\]
for any $x \in \H$, and where the map $\H/V \to \coker_V B$ is induced by $B$.
As in Equation~\eqref{eq:DeterminantIso} this induces an isomorphism 
\begin{align*}
B^*\Det &\cong \Det(\H/V)\otimes \Det(\coker_V B)^*.
\end{align*}
Now the canonical map $V^\perp=\R^{n+k}\rightarrow \H/V$ is an isomorphism, hence $\H/V$ is canonically oriented.
Thus the bundle $\coker_V B$ admits an orientation which is compatible with the 
orientation obtained from the trivialization at the boundary $I\times Z$
\[\psi:\coker_V B|_{\partial I \times Z} \tois \underline{\R}^{{n+k}}.\]
Therefore, the vector bundle $\coker_V B/\psi \to \S^1 \times Z$,
obtained from the quotient construction in Equation \eqref{eq:VectorbundleQuotient} is orientable and hence trivial. Then, Equation \eqref{eq:IndexRelative} and Theorem \ref{thm:AtiyahJanichRelative} imply that $B$ is homotopic to a finite type framing over $Z$. As $I\times \H$ deformation retracts to $\partial I\times \H\cup I\times Z$, we obtain a finite type framed bordism. Therefore $f$ and $g$ are proper homotopic. 
\end{proof}

\bibliographystyle{abbrv}
\bibliography{Bibliography}
\end{document}